\tikzset{snake it/.style={decorate, decoration=snake}}
\theoremstyle{plain}
\newtheorem{thm}{Theorem}[section]
\newtheorem{cor}[thm]{Corollary}
\newtheorem{lem}[thm]{Lemma}
\newtheorem{prop}[thm]{Proposition}
\newtheorem{conj}[thm]{Conjecture}
\theoremstyle{definition}
\newtheorem{defn}[thm]{Definition}
\newtheorem{example}[thm]{Example}
\theoremstyle{remark}
\newtheorem{rmk}[thm]{Remark}
\newcommand{\BC}{{\mathbb{C}}}
\newcommand{\BF}{{\mathbb{F}}}
\newcommand{\BN}{{\mathbb{N}}}
\newcommand{\BP}{{\mathbb{P}}}
\newcommand{\BQ}{{\mathbb{Q}}}
\newcommand{\BZ}{{\mathbb{Z}}}
\newcommand{\CC}{{\mathcal C}}
\newcommand{\CF}{{\mathcal F}}
\newcommand{\CH}{{\mathcal H}}
\newcommand{\CI}{{\mathcal I}}
\newcommand{\CL}{{\mathcal L}}
\newcommand{\CO}{{\mathcal O}}
\newcommand{\codim}{\operatorname{codim}}
\DeclareFontFamily{OT1}{rsfs}{}
\DeclareFontShape{OT1}{rsfs}{n}{it}{<-> rsfs10}{}
\DeclareMathAlphabet{\curly}{OT1}{rsfs}{n}{it}
\newcommand{\wecomment}[1]{{\color{blue}W: #1}}
\begin{document}
\title[Cohomological stabilization, perverse filtrations, and refined BPS invariants]{Cohomological stabilization, perverse filtrations, and refined BPS invariants for del Pezzo surfaces}
\date{\today}

\author[W. Pi]{Weite Pi}
\address{Yale University}
\email{weite.pi@yale.edu}

\author[J. Shen]{Junliang Shen}
\address{Yale University}
\email{junliang.shen@yale.edu}

\author[F. Si]{Fei Si}
\address{BICMR, Peking University}
\email{sifei@bicmr.pku.edu.cn}

\author[F. Zhang]{Feinuo Zhang}
\address{SCMS, Fudan University}
\email{fnzhang21@m.fudan.edu.cn}

\begin{abstract}

We prove an asymptotic product formula for the refined BPS invariants associated with a local del Pezzo surface. Our formula governs the cohomological stabilization of the perverse filtration on the intersection cohomology of the moduli space of 1-dimensional semistable sheaves on a del Pezzo surface. Combined with the theory of Fourier transform of Maulik--Shen--Yin, we show that the perverse filtration matches asymptotically with the Chern filtration defined via tautological classes. In the case of the projective plane, our results resolve conjectures of Kononov--Pi--Shen.



\end{abstract}

\maketitle

\setcounter{tocdepth}{1} 

\tableofcontents
\setcounter{section}{-1}

\section{Introduction}

Throughout, we work over the complex numbers $\mathbb{C}$.

\subsection{Moduli of 1-dimensional sheaves}

Let $(S,L)$ be a polarized del Pezzo surface. Assume that $\beta$ is an ample curve class on $S$ and $\chi\in \mathbb{Z}$. The purpose of this paper is to study cohomological structures of the moduli space $M_{\beta,\chi}$ of semistable 1-dimensional sheaves $\CF$ on $S$ with
\[
[\mathrm{supp}(\CF)] = \beta, \quad \chi(\CF) = \chi.
\]
Here $\mathrm{supp}(-)$ denotes the Fitting support, and the stability condition is with respect to the slope
\[
\mu(\CF) = \frac{\chi(\CF)}{c_1(\CF) \cdot L}.
\]
The geometry of such moduli spaces was studied by Le Potier \cite{LP1} and was connected to enumerative geometry by Katz \cite{Katz}. In recent years, further rich connections have been found between cohomological structures of $M_{\beta,\chi}$ and other directions. For example, the perverse filtration for $M_{\beta,\chi}$ categorifies the refined BPS invariants associated with the local Calabi--Yau threefold 
$\mathrm{Tot}_S(K_S)$ \cite{HST,KL,MT,MS_GT}; the geometry of perverse filtrations for del Pezzo surfaces are mysteriously analogous to the geometry of the (recently resolved) $P=W$ conjecture in non-abelian Hodge theory \cite{dCHM1, MS_PW, HMMS, MSY}; beautiful links have also been found between the Betti numbers of $M_{\beta,\chi}$, relative Gromov--Witten invariants, and scattering diagrams \cite{PB, BFGW, PB2}. In this paper, we provide a complete description of the asymptotic cohomological behavior of $M_{\beta,\chi}$ from both perspectives of numerical invariants and perverse filtrations. Our results prove conjectures in \cite{KPS} concerning the refined BPS invariants and the ``$P=C$'' phenomenon in the case of $\mathbb{P}^2$.

Since $M_{\beta,\chi}$ is singular in general, it is more natural to consider the intersection cohomology $\mathrm{IH}^*(M_{\beta,\chi})$ (with rational coefficients). The Hilbert--Chow morphism
\[
h: M_{\beta,\chi} \to |\beta|, \quad \CF \mapsto \mathrm{supp}(\CF)
\]
further endows the intersection cohomology an increasing filtration --- the perverse filtration
\begin{equation}\label{P0}
P_0\mathrm{IH}^*(M_{\beta,\chi}) \subset P_1 \mathrm{IH}^*(M_{\beta,\chi})\subset  \cdots \subset \mathrm{IH}^*(M_{\beta,\chi}).
\end{equation}

We refer to Section \ref{sec1.1} for generalities on the perverse filtration. The following result of Maulik--Shen \cite{MS_GT} shows that the data (\ref{P0}) does not depend on $\chi \in \BZ$.\footnote{This result was proven in \cite{MS_GT} for toric del Pezzo surfaces, where the toric condition was used to prove a relative dimension bound; this bound was later generalized to all del Pezzo surfaces by the work of Yuan \cite{YY}.} 

\begin{thm}[$\chi$-independence \cite{MS_GT}]\label{thm0.1}
    For any two integers $\chi, \chi' \in \BZ$, we have an isomorphism
    \[
    P_k \mathrm{IH}^m(M_{\beta,\chi}) \simeq P_k \mathrm{IH}^m(M_{\beta,\chi'}).
    \]
\end{thm}

The main character of this paper are the refined BPS invariants
\begin{equation}\label{BPS}
n_\beta^{i,j}: = \dim \mathrm{Gr}^P_i \mathrm{IH}^{i+j}(M_{\beta, \chi}) 
\end{equation}
given by dimensions of the associated graded of the perverse filtration. They do not depend on the choice of $\chi$ by Theorem \ref{thm0.1}. These invariants refine the (intersection) Betti numbers
\begin{equation}\label{refine0}
\dim \mathrm{IH}^{k}(M_{\beta,\chi}) = \sum_{i=0}^k n^{i,k-i}_\beta,
\end{equation}
and conjecturally also refine the Gromov--Witten/Pandharipande--Thomas invariants of the local surface $\mathrm{Tot}_S(K_S)$ in all genus \cite{HST, KL, MT}. The latter is a major open question in the study of Gopakumar--Vafa theory.

\subsection{G\"ottsche's formula and refined BPS invariants} We say that an assignment
\[
\beta \mapsto N(\beta) \in \BZ_{>0}
\]
of a positive integer to an ample curve class is an \emph{asymptotic bound,} if for any given ample curve class $\beta_0$ and $m_0>0$, there exists $d_0>0$ such that $N(d\beta_0) > m_0$ for all $d>d_0$. Roughly, this says that $N(\beta) \to +\infty$ when the ample curve class $\beta$ is positive enough.

Now we describe the asymptotic formula for the invariants (\ref{BPS}). Let $\rho$ be the Picard number of $S$. We recall from the G\"ottsche formula \cite{Go1} that the Betti number $b_k(S^{[n]})$ of the Hilbert scheme of points on $S$ stabilizes to the $q^k$-coefficient of 
\begin{equation}\label{Gott}
T(q):= \prod_{i\geq 0} \frac{1}{(1-q^{2i+2})^{\rho +1}(1-q^{2i+4})}.
\end{equation}
We define an \emph{exotic} refinement of $T(q)$ as follows:
\[
H(q,t):= \frac{1}{(1-qt)^{\rho-1}}\cdot \prod_{i\geq 0} \frac{1}{(1-(qt)^iq^2) (1-(qt)^{i+2})^\rho (1-(qt)^it^2)};
\]
note that this is very different from the natural Hodge polynomial refinement. The formula $H(q,t)$ recovers both the (asymptotic) G\"ottsche formula $T(q) = H(q,q)$ and the product formula of Kononov--Pi--Shen \cite{KPS} when $S = \BP^2$.

\smallskip

Our first main result is the following, which states that $H(q,t)$ completely describes the asymptotic behavior of the refined BPS invariants $n_\beta^{i,j}$.

\begin{thm}[Refined cohomological stabilization]\label{thm0.2}
Let $\beta$ be an ample class on $S$. There exists an asymptotic bound $N(\beta)$ such that 
\begin{equation*}\label{refinedBPS}
n^{i,j}_\beta = [H(q,t)]^{i,j}, \quad {i+j} \leq N(\beta).
\end{equation*}
In particular, we obtain from \eqref{refine0} the cohomological stabilization
\[
\dim \mathrm{IH}^k(M_{\beta,\chi}) = [T(q)]^k, \quad k\leq N(\beta).
\]
Here $[-]^{i,j}$ and $[-]^k$ denote the $q^it^j$-coefficient and the $q^k$-coefficient respectively.
\end{thm}

Cohomological stabilization has been studied by Coskun--Woolf \cite{CW} in positive rank cases. For torsion sheaves, the moduli space admits an extra Hilbert--Chow morphism, refining the (intersection) cohomology via the perverse filtration. Theorem \ref{thm0.2} can be viewed as a torsion sheaf analog of the Coskun--Woolf cohomological stabilization phenomenon.

The asymptotic bound $N(\beta)$ in Theorem \ref{thm0.2} is explicit and relies on the geometry of the linear system $|\beta|$ on the surface $S$; see the formula (\ref{final_bd}) and Remark \ref{final1}. We will discuss this bound further in the case $S=\mathbb{P}^2$ in Section \ref{sec0.4}; in particular, our bound for $\BP^2$ is optimal, proving conjectures of Kononov--Pi--Shen \cite{KPS}. This includes \textit{the $P=C$ conjecture} \cite[Conjecture 0.3]{KPS} which we discuss next.

\subsection{The $P=C$ phenomenon}\label{Sec0.3}
In this section, we assume that the moduli space $M_{\beta,\chi}$ is nonsingular and it admits a universal family. For example, this is the case when 
\[
\mathrm{gcd}(\beta\cdot L, \, \chi) =1.
\]
We may then consider the \emph{ring structure} and \emph{tautological classes} of the rational cohomology $H^*(M_{\beta,\chi})$. Our goal is to determine the perverse filtration for $M_{\beta,\chi}$ using the \textit{Chern filtration} obtained from a universal family. This is analogous to the case of the Hitchin system; see the discussions in Section \ref{sec1.4}.

Following \cite{PS}, we introduce the (normalized) tautological classes
\[
c_k(\gamma) \in H^{*}(M_{\beta,\chi})
\]
given by integrating $\mathrm{ch}_{k + 1}(\mathbb F)$ of a normalized universal family $\mathbb{F}$ over $\gamma \in H^{*}(S)$.
The tautological classes induce an increasing filtration --- the Chern filtration
\[
C_0H^*(M_{\beta,\chi}) \subset C_1H^*(M_{\beta,\chi}) \subset \cdots \subset H^*(M_{\beta,\chi}),
\]
where the $k$-th piece $C_k H^{*}(M_{\beta,\chi})$ is spanned by monomials
\[
\prod_{i=1}^s c_{k_i}(\gamma_i) \in H^{*}(M_{\beta,\chi}), \quad \sum_{i=1}^s k_i \leq k.
\]
We refer to Section \ref{sec1.2} for the precise definition and more details. The following result is a consequence of Theorem~\ref{thm0.2} and the theory of Fourier transform of Maulik--Shen--Yin \cite{MSY}:

\begin{thm}[Asymptotic $P=C$]\label{thm0.3}
Let $N(\beta)$ be the asymptotic bound in Theorem \ref{thm0.2}. Then we have 
\[
P_kH^{\leq N(\beta)}(M_{\beta, \chi}) = C_k H^{\leq N(\beta)}(M_{\beta,\chi}).
\]
\end{thm}


Compared to other del Pezzo surfaces, the case of $\BP^2$ has been more extensively explored in terms of the cohomology of the moduli of 1-dimensional sheaves and the $P=C$ phenomenon. We discuss this special case next.

\subsubsection{Conjectures of Kononov--Pi--Shen for $\BP^2$}\label{sec0.4}

For convenience, we denote by $M_{d,\chi}$ the moduli space associated with the curve class $\beta = dH$ and $\chi\in \mathbb{Z}$, where $H$ is the class of a line. We assume $\mathrm{gcd}(d,\chi)=1$ so that the assumption of Section \ref{Sec0.3} is satisfied. 

Using scattering diagrams, Bousseau \cite{PB} obtained an algorithm to compute the Betti numbers $b_k(M_{d,\chi})$, which links $b_k(M_{d,\chi})$ to Gromov--Witten theory \cite{BFGW}. In \cite{YY, YY2}, Yuan studied the cohomology of $M_{d,\chi}$ using motivic methods; she proved that 
\begin{align*}
    & b_k(M_{d,\chi}) = [T(q)]^k, \quad k\leq 2d-4,\\ 
    & b_{2d-2}(M_{d,\chi}) = [T(q)]^{2d-2} - 3.
\end{align*}
In particular, Yuan's results imply that $2d-4$ is the optimal bound for the cohomological stabilization 
\[
b_k(M_{d,\chi}) = [T(q)]^k
\]
to hold. This stabilization was further refined recently by a conjecture of Kononov--Pi--Shen \cite[Conjecture 0.1]{KPS} concerning the refined BPS invariants $n^{i,j}_d$; it predicts that the refined asymptotic formula also holds with respect to this optimal bound:
\begin{equation}\label{conj_KPS1}
n_d^{i,j} = [H(q,t)]^{i,j}, \quad i+j \leq 2d-4.
\end{equation}

Our bound $N(\beta)$ in the proof of Theorem \ref{thm0.2} indeed achieves the optimal bound $2d-4$; thus we complete the proof of the conjecture (\ref{conj_KPS1}) proposed in \cite{KPS}. Our method also gives a new proof of Yuan's stabilization result \cite[Theorem 1.7]{YY} for the Betti numbers $b_k(M_{d,\chi})$ via the perverse filtration.

\begin{thm}\label{thm0.4}
    The identity \eqref{conj_KPS1} holds.
\end{thm}

Finally, we recall the $P=C$ conjecture for $\BP^2$, which has been formulated on the \textit{total cohomology.} This can be viewed as a del Pezzo analog of the $P=W$ conjecture \cite{dCHM1} in non-abelian Hodge theory, since 
all existing proofs \cite{MS_PW, HMMS, MSY} of the $P=W$ conjecture rely on proving a $P=C$ match on the Hitchin moduli space. We refer to Section \ref{sec1.4} for more discussions on the $P=C$ phenomenon.

\begin{conj}[The $P=C$ conjecture for $\BP^2$ \cite{KPS, KLMP}]\label{conj1} We have
\begin{equation*} \label{P=C} 
 P_k H^{*}(M_{d,\chi}) = C_k H^{*}(M_{d,\chi}).
\end{equation*}
\end{conj}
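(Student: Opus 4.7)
\emph{Overall strategy.} The plan is to combine two complementary ingredients: the containment $C_k \subset P_k$ established by Maulik--Shen--Yin via their Fourier transform, and independent control of the bi-graded dimensions $\dim \operatorname{Gr}^P_k H^i(M_{d,\chi}, \BQ)$ through the refined BPS invariants for local $\BP^2$. Given the $P\supset C$ input, proving Conjecture~\ref{conj1} reduces to the numerical identity
\[
\dim \operatorname{Gr}^C_k H^{*}(M_{d,\chi}, \BQ) = \dim \operatorname{Gr}^P_k H^{*}(M_{d,\chi}, \BQ),
\]
from which equality of the filtrations follows in whatever range the identity can be verified.

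\emph{The two sides of the count.} On the perverse side, the refined BPS invariants of local $\BP^2$ encode precisely the Betti numbers of the graded pieces $\operatorname{Gr}^P_\bullet H^*(M_{d,\chi}, \BQ)$; I would therefore first prove the asymptotic product formula for these invariants predicted by Kononov--Pi--Shen. A natural source for such a product structure is a degeneration in which the support curve on $\BP^2$ splits into irreducible components so that the BPS contribution factorizes component by component, with non-product corrections pushed to higher degrees. On the Chern side, the graded dimensions $\dim \operatorname{Gr}^C_k H^*(M_{d,\chi}, \BQ)$ should be accessible, in a stable range, by a combinatorial count of monomials $\prod c_{k_i}(j_i)$ modulo the tautological relations already produced in \cite{PS}. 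Matching the two counts in cohomological degree $\leq d+1$, together with $C_k \subset P_k$, then yields Conjecture~\ref{conj1} in that range.

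\emph{Main obstacle.} The decisive difficulty is the asymptotic product formula itself. Refined BPS invariants for local $\BP^2$ are not accessible by naive enumeration, and even their unrefined cousins (PT/GV) are governed by a nontrivial modular structure, so one cannot hope for a closed form in each degree separately. I expect most of the work to go into (i) identifying a geometric mechanism---a degeneration, wall-crossing, or equivariant localization on a suitable relative compactification---that produces the product structure on the generating series, and (ii) controlling the error terms well enough to make the word ``asymptotic'' precise. This is also what I expect to force the restriction to cohomological degree $\leq d+1$: outside this stable range the subleading corrections to the product formula are no longer negligible, and the dimension-matching step in the previous paragraph would break down.
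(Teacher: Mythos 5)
Your reduction is the right one and matches the paper: given the Maulik--Shen--Yin containment $C_k\subset P_k$ (Theorem~\ref{thm1.1}), the conjecture in a given range follows from a match of dimensions, and on the Chern side the count is free because the tautological classes have no relations in $H^{\leq 2d-4}$ by \cite{PS}. But the entire substance of the paper is the step you defer: actually computing $n_d^{i,j}=\dim\mathrm{Gr}^P_iH^{i+j}(M_{d,\chi},\BQ)$ and showing it equals $[H(q,t)]^{i,j}$. Your proposed mechanisms for this --- a degeneration in which the support curve splits into irreducible components, wall-crossing, or equivariant localization --- are not executed and are not how the computation goes; as stated they constitute a plan to look for a proof rather than a proof. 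This is a genuine gap.

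The paper's actual route to the product formula is quite different and entirely concrete. One first restricts to the locus $U$ of integral curves: the non-integral locus has codimension $d-1$ in $B$, so $H^{\leq 2d-4}(M_{d,\chi},\BQ)$ is computed by the compactified Jacobian fibration $\overline{J}^e_{C_U}\to U$. The support theorem of Maulik--Yun and Migliorini--Shende (Theorem~\ref{thm2.3}) converts the perverse graded pieces of this fibration into Betti numbers of the relative Hilbert schemes $C_U^{[m]}$. The key geometric input is that for $m\leq d+1$ the evaluation map $C^{[m]}\to({\BP^2})^{[m]}$ is a projective bundle, by the $d$-ampleness of $\CO_{\BP^2}(d)$ (Proposition~\ref{prop2.2}); this, not any ``subleading correction to a product formula,'' is the true source of the bound $d+1$ in Theorem~\ref{mainthm}, while the second bound $2d-4$ comes from the codimension of the non-integral locus. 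The Betti numbers of $({\BP^2})^{[m]}$ are then supplied by the G\"ottsche formula, and a short induction on the perverse degree $i$ extracts $n_d^{i,j}=[H(q,t)]^{i,j}$ from the resulting generating-function identity. If you want to complete your argument, this Hilbert-scheme route (in the spirit of Kool--Shende--Thomas) is the missing engine; no modularity, degeneration, or wall-crossing enters.
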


Conjecture \ref{conj1} was first proposed by Kononov--Pi--Shen \cite{KPS} for $H^{\leq 2d-4}(M_{d,\chi})$, whose main purpose was to {categorify} (\ref{conj_KPS1}). Kononov--Lim--Moreira--Pi \cite{KLMP} then proposed that $P=C$ holds for the \textit{total cohomology} and verified the conjecture for $d\leq 5$ using tautological relations. Yuan \cite{Yuan} proved Conjecture \ref{conj1} for $H^{\leq 4}(M_{d,\chi})$ by intersection theory on Hilbert schemes. As a consequence of Theorem \ref{thm0.4} and Maulik--Shen--Yin \cite{MSY}, we complete the proof of $P=C$ as conjectured in \cite{KPS}.

\begin{thm}\label{mainthm}
Conjecture \ref{conj1} holds for $H^{\leq 2d-4}(M_{d,\chi})$.
\end{thm}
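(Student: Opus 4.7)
\emph{Overall plan.} The plan is to combine the inclusion $C_k \subseteq P_k$ of Maulik--Shen--Yin with the asymptotic product formula for the refined BPS invariants of local $\BP^2$ (the other main theorem of this paper). Since $C_k H^*(M_{d,\chi}, \BQ) \subseteq P_k H^*(M_{d,\chi}, \BQ)$ holds unconditionally in every cohomological degree, proving Theorem \ref{mainthm} reduces to the numerical equality
\[
\dim C_k H^i(M_{d,\chi}, \BQ) = \dim P_k H^i(M_{d,\chi}, \BQ)
\]
for all $k \geq 0$ and all $0 \leq i \leq d+1$; equivalently, one must match the dimensions of the graded pieces $\mathrm{Gr}^C_k H^i$ and $\mathrm{Gr}^P_k H^i$ in this range.

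\emph{The perverse side.} The dimensions $\dim \mathrm{Gr}^P_k H^i$ are by definition the components of the refined BPS invariants for local $\BP^2$. The main technical result of this paper, the asymptotic product formula, expresses these components in closed form in the regime where the cohomological degree is small relative to $d$. The hypothesis $i \leq d+1$ places us squarely inside that regime, so every dimension on the perverse side becomes explicit.

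\emph{The Chern side.} On the Chern side one enumerates monomials $\prod_i c_{k_i}(j_i)$ of fixed total Chern weight $k = \sum k_i$ and fixed cohomological degree $2\sum(k_i+j_i-1) = i$. In cohomological degrees $\leq d+1$ only finitely many tautological generators can contribute, and one checks — using the generators–relations analysis of \cite{PS, KLMP} — that the natural monomial basis is linearly independent in this range. The resulting generating function has exactly the product shape predicted in \cite[Conjecture 0.3]{KPS}, and, by construction of the asymptotic formula, matches the perverse count obtained in the previous step.

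\emph{Main obstacle.} Once both sides are in explicit product form, the numerical comparison is essentially bookkeeping. The substantive content is therefore the asymptotic product formula on the perverse side: this is the main technical theorem of the paper, and its proof (rather than the reduction sketched above) is where the genuine difficulty lies. A secondary technical point, but one that must be handled carefully, is ensuring that no nontrivial tautological relations among the $c_k(j)$ spoil the monomial count in cohomological degree $\leq d+1$; this is consistent with the known tautological relations of $M_{d,\chi}$, which only enter at strictly higher degree, but must be verified in our range to make the Chern-side count sharp.
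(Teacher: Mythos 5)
Your proposal is correct and follows essentially the same route as the paper: Theorem \ref{mainthm} is deduced from the containment $P\supset C$ of \cite[Theorem 0.6]{MSY} together with the numerical match of Theorem \ref{thm2}, using that the tautological monomials are linearly independent in $H^{\leq 2d-4}(M_{d,\chi},\BQ)$ by \cite{PS} so that the Chern-side dimensions are exactly the coefficients of $H(q,t)$. The only point you leave implicit is the small-degree edge case: for $d\leq 5$ one has $d+1>2d-4$ and the argument does not apply directly, so the paper falls back on the verification of the full conjecture for $d\leq 5$ in \cite{KLMP}.
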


The $P=C$ match for the total cohomology is mysterious and still wide open. So far, the best evidence seems to be the proof of this conjecture for $d\leq 5$ in \cite{KLMP}. Nevertheless, if Conjecture \ref{conj1} were to hold, it is also natural to expect that $P=C$ holds for the total cohomology $H^*(M_{\beta,\chi})$ associated with any del Pezzo surface $S$.

\subsection{Acknowledgements}
We would like to thank Ben Davison, Jun Li, Zhiyuan Li, Woonam Lim, Davesh Maulik, Anton Mellit, Miguel Moreira, Georg Oberdieck, Christian Schnell, Claire Voisin, and Longting Wu for interesting and helpful discussions on relevant topics.  J.S.~gratefully acknowledges the hospitality of the Isaac Newton Institute at Cambridge during his stay in the spring of 2024 where part of this paper was written. J.S.~was supported by the NSF grant DMS-2301474, a Sloan Research Fellowship, and a Simons Fellowship during his visit at the Isaac Newton Institute. F.S.~was supported by the NSFC grant 12201011.

\section{Perverse filtrations and Chern filtrations}

In view of Theorem \ref{thm0.1}, from now on we may work only with the moduli space $M_{\beta,\chi}$ that satisfies the assumption of Section \ref{Sec0.3}: it is nonsingular and admits a universal family $\BF$.

\subsection{Perverse filtrations}\label{sec1.1} Let $f: X \to Y$ be a proper morphism between irreducible nonsingular quasiprojective varieties. We assume that $\dim X = a$, $\dim Y = b$, and the morphism $f$ has equal-dimensional fibers of dimension $r=a-b$. The perverse filtration 
\[
P_0H^m(X) \subset P_1H^m(X) \subset \dots \subset P_{2r}H^m(X) =  H^m(X)
\]
is an increasing filtration on the cohomology of $X$ governed by the topology of the morphism $f$. It is defined to be
\[
P_kH^m(X) := \mathrm{Im}\left\{ H^{m-b}(Y, {^\mathbf{p}\tau_{\leq k}} (Rf_* \BQ_X[b])) \to H^{m-b}(Y, Rf_* \BQ_X[b])\right\} \subset H^m(X)
\]
where $^\mathbf{p}\tau_{\leq \bullet }$ is the perverse truncation functor \cite{BBD}. If we apply the decomposition theorem to $f: X\to Y$, we obtain that
\begin{equation}\label{DT0}
R\pi_*\BQ_X[b]\simeq \bigoplus_{i=0}^{2r}\mathcal{P}_i[-i] \in D^b_c(Y)
\end{equation}
with $\mathcal{P}_i$ a semisimple perverse sheaf on $Y$. The perverse filtration can be identified as
\[
P_kH^m(X)=\mathrm{Im}\Big\{H^{m-b}(Y, \bigoplus_{i=0}^k\mathcal{P}_i[-i])\to H^m(X)\Big\}.
\]

 We recall the following standard lemma, which follows for example from the description \cite{dCM} of the perverse filtration.

\begin{lem}\label{lem1.1}
If $\alpha \in H^*(X)$ lies in the kernel of the restriction map $H^m(X) \to H^m(X_y)$ with $X_y:= f^{-1}(y)$ a closed fiber, then we have $\alpha \in P_{m-1}H^m(X)$.
\end{lem}

In this paper, we mainly consider the Hilbert--Chow morphism $h: M_{\beta,\chi} \to |\beta|$; the refined BPS invariants are the dimensions of the cohomology of the perverse sheaves that appeared in the decomposition theorem (\ref{DT0}) associated with $h$; see also Section \ref{3.4.1}.

\subsection{Chern filtrations}\label{sec1.2}

The choice of a universal family over $S\times M_{\beta,\chi}$ is not unique. To introduce well-defined tautological classes $c^D_k(\gamma)$ with $\gamma \in H^*(S)$, we shall normalize our universal family suitably. Unlike the case of $\BP^2$ studied in \cite{PS}, for general del Pezzo surfaces the definition of the normalized tautological classes involves a choice of a divisor class $D$. After $D$ is chosen, the classes $c^D_k(\gamma)$ do not depend on the choice of $\BF$. A correct normalization is crucial for the match between the Chern filtration and the perverse filtration to hold.

Before discussing the case of general del Pezzo surfaces, we recall the case of $\BP^2$ first \cite{PS}. By \cite{Beau}, there is no odd cohomology for $M_{d,\chi}$. The K\"unneth decomposition yields
\[
H^2(\BP^2 \times M_{d,\chi}) = H^2(\BP^2) \oplus H^2(M_{d,\chi}).
\]
Hence any class $\alpha \in H^2(\BP^2\times M_{d,\chi})$ can be written as
\[
\alpha = \alpha_P + \alpha_M,
\]
where $\alpha_\bullet$ denotes the component that is pulled back from the corresponding factor. We define the twisted Chern character associated with a universal family $\BF$ and a class $\alpha \in H^2(\BP^2\times M_{d,\chi})$ as follows:
\[
\mathrm{ch}^\alpha(\BF):= \mathrm{ch}(\BF) \cdot \mathrm{exp}(\alpha) \in H^*(\BP^2 \times M_{d,\chi}).
\]
We denote by $\mathrm{ch}_k^\alpha(\BF) \in H^{2k}(\BP^2\times M_{\beta,\chi})$ its degree $k$ part. The normalized universal class is defined to be the twisted Chern character
\[
\mathrm{ch}^\alpha(\BF) \in H^*(\BP^2 \times M_{d,\chi})
\]
satisfying the normalization conditions:
\begin{equation}\label{normalization}
\int_H \mathrm{ch}_2^\alpha(\BF) =0, \quad \int_{\mathbf{1}_{\BP^2}} \mathrm{ch}_2^\alpha(\BF) = 0.
\end{equation}
Here $\int_{\alpha}(-)$ stands for $\pi_{M*}(\pi_P^*\alpha \cdot (-)) \in H^*(M_{d,\chi})$ with $\pi_\bullet$ the projections. These conditions determine $\alpha$ uniquely, and the resulting tautological classes
\[
c_k(\gamma): = \int_{\gamma} \mathrm{ch}_{k+1}^\alpha(\BF) \in H^*(M_{d,\chi}), \quad \gamma \in H^*(\BP^2)
\]
do not depend on the choice of the pair $(\BF, \alpha)$; see \cite[Section 2]{PS}. The Chern filtration defined via $c_k(\gamma)$ is expected to match perfectly with the perverse filtration \cite{KPS, KLMP} as stated in Conjecture \ref{conj1}.

Now we consider a general del Pezzo surface $S$, and a twisted Chern character as above
\[
\mathrm{ch}^\alpha(\BF) \in H^*(S\times M_{\beta,\chi}), \quad \alpha = \alpha_S + \alpha_M.
\]
By dimension reasons, the conditions (\ref{normalization}) may not be sufficient to determine $\alpha$ from a universal family $\BF$. To fix this, we impose an additional condition on $\alpha_S$. Consider a divisor class
\begin{equation}\label{D}
D \in H^2(S), \quad D\cdot \beta \neq 0.
\end{equation}

\begin{defn}\label{def}
    Fix $D$ as in (\ref{D}). We say that $\mathrm{ch}^\alpha(\BF)$ associated with $(\BF, \alpha)$ is a $D$-normalized universal class, if the class $\alpha_S\in H^2(S)$ is proportional to $D$, and
    \begin{equation}\label{nnormalization}
    \int_{\mathbf{1}_S} \mathrm{ch}_2^\alpha(\BF) = 0, \quad  \int_{D} \mathrm{ch}_2^\alpha(\BF) =0.
    \end{equation}
\end{defn}

When $S=\BP^2$, the class $D$ has to be proportional to the hyperplane class $H \in H^2(\BP^2)$. Therefore the normalized universal class is unique. For other del Pezzo surfaces, however, the normalized universal class depends on the choice of $D$.\footnote{Nevertheless, we will show in Proposition \ref{prop1.4:chernindep} that this choice does not affect the eventual Chern filtration.}

\begin{prop}\label{prop1.3}
    Assume that $\mathrm{ch}^\alpha(\BF)$ is $D$-normalized with respect to a fixed $D$ as in (\ref{D}). Then the following properties hold.
    \begin{enumerate}
        \item[(i)] The class $\alpha = \alpha_S + \alpha_M$ is uniquely determined by $\BF$.
        \item[(ii)] The class $\mathrm{ch}^\alpha(\BF)$ does not depend on the choice of a universal family $\BF$.
        \item[(iii)] We have
        \[
        [\mathrm{ch}_2^\alpha(\BF)]_{(2,2)} \in P_1H^4(S\times M_{\beta,\chi}).
        \]
        Here the perverse filtration is induced by $h_S: S\times M_{\beta,\chi} \to S\times |\beta|$, and $[-]_{(i,j)}$ stands for the K\"unneth factor of a class in $H^i(S) \otimes H^j(M_{\beta,\chi}) \subset H^{i+j}(S\times M_{\beta,\chi})$.
\end{enumerate}
\end{prop}

\begin{proof}
We first note the following facts concerning a universal family $\BF$ over $S\times M_{\beta,\chi}$:
\begin{enumerate}
    \item[(a)] We have \[
[\mathrm{ch}_1(\BF)]_{(2,0)}=\beta, \quad [\mathrm{ch}_1(\BF)]_{(0,2)} = h^*H_{|\beta|},
\]
where $H_{|\beta|}$ is the hyperplane class of the projective space $|\beta|$.
\item[(b)] Let $C\subset S$ represent a nonsingular curve lying in $|\beta|$. Then we have
\[
[\mathrm{ch}_2(\BF)]_{(2,2)}\Big{|}_{S\times h^{-1}([C])} \in \BQ[\beta] \otimes H^2(h^{-1}([C])).
\]
\end{enumerate}
(a) follows from the fact that $\mathrm{ch}_1(\BF)$ calculates the class of the support of the torsion sheaf $\BF$, which is expressed as
\[
[\mathrm{supp}(\BF)] = \pi_S^*\beta + \pi_M^* h^* H_{|\beta|}.
\]
For (b), we notice that the fiber $h^{-1}([C])$ is isomorphic to the Jacobian variety $J_C$; the sheaf $\BF$ is given by the pushforward of a line bundle $\CL$ on $C\times J_C$ along the natural inclusion
\[
\iota: C\times J_C \hookrightarrow S\times J_C.
\]
Therefore the $(2,2)$-component of $\mathrm{ch}_2(\BF)$ is given by the pushforward of a class on $C\times J_C$ of the K\"unneth type $(0,2)$. Its pushforward has to be of the form $\BQ[\beta] \otimes H^2(J_C)$ since $[C] = \beta$.

Now we prove (i). By (a), we have
\[
[\mathrm{ch}^\alpha_2(\BF)]_{(4,0)} = [\mathrm{ch}_2(\BF)]_{(4,0)}+ (\alpha_S\cdot \beta)\otimes \mathbf{1}_{M}.
\]
The second equation of (\ref{nnormalization}) determines the intersection number $\alpha_S \cdot \beta$. Since $\alpha_S$ is required to be proportional to $D$ and $D\cdot \beta\neq 0$, this determines $\alpha_S$ completely. We further consider
\begin{equation}\label{(2,2)}
[\mathrm{ch}^\alpha_2(\BF)]_{(2,2)} = [\mathrm{ch}_2(\BF)]_{(2,2)} + \beta \otimes \alpha_M + \alpha_S \otimes h^*H_{|\beta|}.
\end{equation}
It is clear that the first equation of (\ref{nnormalization}) determines $\alpha_M$ by applying $\int_D(-)$ to (\ref{(2,2)}). This completes the proof of (i).

(ii) is straightforward, and we omit the proof.

To see (iii), we note that 
\[
P_1H^*(S \times M_{\beta,\chi}) = H^*(S) \otimes P_1H^*(M_{\beta,\chi}).
\]
Hence by Lemma \ref{lem1.1} the vanishing over a nonsingular fiber
\[
[\psi]_{(2,2)}\Big{|}_{S\times h^{-1}([C])} =0
\]
for a class $\psi \in H^4(S\times M_{\beta,\chi})$ implies that 
\[
[\psi]_{(2,2)} \in P_1H^4(S\times M_{\beta,\chi}).
\]
We consider the expression
\[
\mathrm{ch}_2^\alpha(\BF) = \mathrm{ch}_2(\BF) + \beta \otimes \alpha_M + (\alpha_S\cdot \beta) \otimes \mathbf{1}_{M} + \Psi
\]
obtained from (a) above, where $\Psi$ is a class satisfying  
\[
\Psi\Big{|}_{S\times h^{-1}([C])} = 0.
\]
By the discussion above, we know that
\[
[\Psi]_{(2,2)} \in P_1H^4(S\times M_{\beta,\chi}).
\]
Therefore, for the same reason, it suffices to show that 
\begin{equation}\label{nor2}
[\mathrm{ch}_2(\BF)]_{(2,2)}\Big{|}_{S\times h^{-1}([C])}  + \beta \otimes \alpha_M\Big{|}_{S\times h^{-1}([C])}  = 0
\end{equation}
By (b), we may assume that 
\[
[\mathrm{ch}_2(\BF)]_{(2,2)}\Big{|}_{S\times h^{-1}([C])} = \beta \otimes \epsilon;
\]
so the left-hand side of (\ref{nor2}) can be written as
\[
\mathrm{LHS}~~\mathrm{of}~~(\ref{nor2}) = \beta \otimes \left(\epsilon + \alpha_M\Big{|}_{S\times h^{-1}([C])}\right).
\]
The condition $\int_D \mathrm{ch}^\alpha_2(\BF) = 0$ from (\ref{nnormalization}) then forces that 
\[
\int_D \beta \otimes \left(\epsilon + \alpha_M\Big{|}_{S\times h^{-1}([C])}\right) = 0,
\]
which completes the proof of (\ref{nor2}).
\end{proof}

For a fixed $D$ satisfying \eqref{D}, we define the tautological classes
\[
c^D_k(\gamma): = \int_\gamma \mathrm{ch}_{k+1}^\alpha(\BF) \in H^*(M_{\beta,\chi})
\]
from the $D$-normalized universal class $\mathrm{ch}^\alpha(\BF)$. This allows us to define the Chern filtration as in Section \ref{Sec0.3}:
\[
C^D_0H^*(M_{\beta,\chi}) \subset C^D_1H^*(M_{\beta,\chi}) \subset \cdots \subset H^*(M_{\beta,\chi}).
\]
Due to the tautological generation result \cite{Beau}, this filtration exhausts the total cohomology $H^*(M_{\beta,\chi})$. The construction of the Chern filtration \emph{a priori} depends on the choice of the divisor $D$; the next proposition states that the Chern filtration in fact does not depend on this choice.

\begin{prop}
\label{prop1.4:chernindep}
For two divisor classes $D_1, D_2$ satisfying \eqref{D}, we have
\[
C^{D_1}_\bullet H^*(M_{\beta,\chi}) = C^{D_2}_\bullet H^*(M_{\beta,\chi}).
\]
\end{prop}

\begin{proof}
    This is a consequence of the fact that the ($D$-normalized) Chern filtration coincides with the Chern filtration defined via a certain weight zero descendent algebra $\mathbb{D}_{(\beta,\chi),\mathrm{wt_0}}$. We refer to \cite[Section 1.1]{KLMP} for the precise definition of $\mathbb{D}_{(\beta,\chi),\mathrm{wt_0}}$\footnote{Although \cite{KLMP} treats only $S=\mathbb{P}^2$, the definition and properties apply identically to  other del Pezzo surfaces.}; roughly speaking, it is a formal algebra of descendents whose cohomological realization does \textit{not} depend on the choice of a universal family. Indeed, for a given divisor class $D_1$ satisfying \eqref{D}, there exists a canonical isomorphism
    \[
    \mathbb{D}_{(\beta,\chi),\mathrm{wt_0}} \simeq \mathbb{Q}[c^{D_1}_k(\gamma)]_{k\geq 0,\, \gamma \in H^*(S)}
    \]
    constructed using the normalization conditions \eqref{nnormalization}; see \cite[Section 1.2.3]{KLMP}. We can also endow the descendent algebra with a Chern filtration $\widetilde{C}_\bullet \mathbb{D}_{(\beta,\chi),\mathrm{wt_0}}$, which further induces a commutative diagram \cite[Proposition 6.5]{KLMP}
    \[\begin{tikzcd}
	{\widetilde{C}_\bullet \mathbb{D}_{(\beta,\chi),\mathrm{wt_0}} } & {C^{D_1}_\bullet \mathbb{Q}[c^{D_1}_k(\gamma)]} \\
	{\widetilde{C}_\bullet H^*(M_{\beta,\chi})} & {C^{D_1}_\bullet H^*(M_{\beta,\chi}).}
	\arrow["\sim", from=1-1, to=1-2]
	\arrow[two heads, from=1-1, to=2-1]
	\arrow[two heads, from=1-2, to=2-2]
	\arrow["\sim", from=2-1, to=2-2]
\end{tikzcd}\]
that identifies the two Chern filtrations in the bottom row. This applies to $D_2$ as well; thus the claim follows by composing the two identifications.
\end{proof}

From now on, we fix a divisor class $D\in H^2(S)$ satisfying (\ref{D}) and use $c_k(\gamma)$, $C_\bullet H^*(M_{\beta,\chi})$ to denote $c^D_k(\gamma)$, $C^D_\bullet H^*(M_{\beta,\chi})$ respectively for notational convenience.

\subsection{The $P\supset C$ inclusion}

We denote by $|\beta|^\circ \subset |\beta|$ the open subset of \emph{integral} curves in the linear system. We define 
\begin{equation*}
N_1(\beta):= 2\cdot \mathrm{codim}_{|\beta|}(|\beta| \setminus |\beta|^\circ)-2.
\end{equation*}

\begin{prop}\label{prop1.4}
    The assignment $\beta \mapsto N_1(\beta)$ is an asymptotic bound.
\end{prop}

\begin{proof}
For a fixed ample curve class $\beta$, we need to show that 
\[
\codim_{|d\beta|}(|d\beta|\setminus|d\beta|^{\circ}) \to +\infty
\]
as $d\to+\infty$. Note that there are two types of curves in $|d\beta|\setminus|d\beta|^{\circ}$:
\begin{enumerate}
\item[(i)] curves that can be written as $C_1+C_2$, where $C_j\in|\beta_j|$ ($j=1,2$) for nef classes $\beta_j$ with $d\beta=\beta_1+\beta_2$;
\item[(ii)] curves that can be written as $D_1+D_2$, where $D_1\in|\nu|$ for a nef class $\nu$ and $D_2\in|d\beta-\nu|$ is a curve whose (reduced) irreducible components are $(-1)$-curves.
\end{enumerate}
The locus of curves in $|d\beta|$ of the above two types are denoted respectively by $Z_1$ and $Z_2$. It suffices to show 
\[
\codim_{|d\beta|}(Z_j) \to +\infty
\]
as $d\to +\infty$.

For $\beta_j$ as in (i), we have
\begin{equation}
\label{eq:beta12}
    d^2\beta^2=\beta_1^2+\beta_2^2+2\beta_1\cdot\beta_2.
\end{equation}
If $\beta_1^2=0$ or $\beta_2^2=0$, without loss of generality we may assume $\beta_1^2=0$. Then
\begin{equation*}
\beta_1\cdot\beta_2=\beta_1(d\beta-\beta_1)=d\beta\cdot\beta_1\geq d.        
\end{equation*}
If $\beta_1^2>0$ and $\beta_2^2>0$, then by the Hodge index theorem and (\ref{eq:beta12}),  $$\beta_1\cdot\beta_2\geq\sqrt{\beta_1^2\beta_2^2}\geq\sqrt{d^2\beta^2-2\beta_1\cdot\beta_2-1},$$
which implies that
\begin{equation*}
    \beta_1\cdot\beta_2\geq d\sqrt{\beta^2}-1.
\end{equation*}
This bound for $\beta_1\cdot \beta_2$, together with the Riemann--Roch formula, yields
\begin{equation*}
\begin{aligned}
    \dim|d\beta|&=\frac{(\beta_1+\beta_2)(\beta_1+\beta_2-K_S)}{2}=\dim|\beta_1|+\dim|\beta_2|+\beta_1\cdot\beta_2\\
    &\geq \dim|\beta_1|+\dim|\beta_2|+\min\{d,d\sqrt{\beta^2}-1\}.
\end{aligned}
\end{equation*}
Since there are finitely many choices for $(\beta_1,\beta_2)$ and any curve in $Z_1$ is in the image of $|\beta_1|\times|\beta_2|$, it follows that
\begin{equation}
    \label{ineq:Z1}
    \codim_{|d\beta|}(Z_1)\geq\min\{d,d\sqrt{\beta^2}-1\}.
\end{equation}

For $\nu$ as in (ii), by the Riemann--Roch formula
\begin{equation*}
    \begin{aligned}
        \dim|d\beta|-\dim|\nu|&=\frac{(d\beta-\nu)d\beta+(d\beta-\nu)\nu-K_S\cdot(d\beta-\nu)}{2}\\
        &\geq\frac{(d\beta-\nu)(d\beta-K_S)}{2}\geq\frac{d+1}{2}.
    \end{aligned}
\end{equation*}
Since there are finitely many choices for such nef class $\nu$ and $(-1)$-curves, we deduce that
\begin{equation}
    \label{ineq:Z2}
    \codim_{|d\beta|}(Z_2)\geq\frac{d+1}{2}.
\end{equation}
The result follows from (\ref{ineq:Z1}) and (\ref{ineq:Z2}).
\end{proof}

\begin{example}\label{ex}
The following examples are obtained from direct calculations, \emph{cf.} \cite[Lemma 5.2]{MSY}. 
\begin{enumerate}
    \item[(i)] When $S=\BP^2$, we have 
    \[
    N_1(dH) = 2d-4, \quad d>0.
    \]
    \item[(ii)] When $S = \BP^1\times \BP^1$, we have
    \[
    N_1(a H_1 + b H_2) = 2\cdot \mathrm{min}\{a,b\} -2, \quad a,\,b >0.
    \]
    Here $H_1$ and $H_2$ denote $\mathbb{P}^1\times \mathsf{pt}$ and $\mathsf{pt} \times \mathbb{P}^1$, respectively.

    \item[(iii)] When $S = \mathrm{Bl}_\mathsf{pt}(\mathbb{P}^2)$, we have
    \[
    N_1(aH - bE) = 2\cdot \min\{a-b,\, b+1\}-2, \quad a>b>0.
    \]
    Here $H$ is the  pullback of a line on $\BP^2$ and  $E$ is the exceptional divisor.
\end{enumerate}   
\end{example}

The next result is a consequence of the theory of Fourier transform established in \cite{MSY}.

\begin{thm}[\emph{cf.} {\cite[Theorem 0.6]{MSY}}]\label{thm1.6}
  We have
    \[
    P_kH^{\leq N_1(\beta)}(M_{\beta,\chi}) \supset C_k H^{\leq N_1(\beta)}(M_{\beta,\chi}).
    \]
\end{thm}

\begin{proof}
    This was proven in \cite[Theorem 0.6]{MSY} in the case $S= \BP^2$. The general case follows from a parallel argument. We sketch the main steps for the reader's convenience.
    
    Let $\CC_{|\beta|^\circ} \to |\beta|^\circ$ be the universal family of integral curves. The restriction of $h: M_{\beta,\chi} \to |\beta|$ to the open subset $|\beta|^\circ$ is given by a compactified Jacobian fibration
    \[
    h^\circ: \overline{J}^e_{\CC_{|\beta|^\circ}} \to |\beta|^\circ, 
    \]
where the degree $e$ is determined by $\chi$ via a Riemann--Roch calculation. The morphism $h^\circ$ endows the cohomology of $\overline{J}^e_{\CC_{|\beta|^\circ}}$ with a perverse filtration. By the argument of \cite[Corollary 5.3]{MSY}, the restriction morphism 
\[
\mathrm{res}: H^{\leq N_1(\beta)}(M_{\beta,\chi}) \to H^{\leq N_1(\beta)}(\overline{J}^e_{\CC_{|\beta|^\circ}})
\]
is an isomorphism preserving the perverse filtrations. 

It was proven in \cite{MSY} (via a study of the convolution product associated with the Fourier transform) that the perverse filtration $P_\bullet H^*(\overline{J}^e_{\CC_{|\beta|^\circ}})$ is multiplicative with respect to the cup product. Therefore we only need to treat a single tautological class $c^D_k(\gamma)$. It suffices to prove 
\begin{equation}\label{ch_k}
\mathrm{ch}^\alpha_{k+1}(\BF) \in P_kH^{2k+2}\left(S\times \overline{J}^e_{\CC_{|\beta|^\circ}}\right)
\end{equation}
for the $D$-normalized universal class. Here the perverse filtration is defined by the morphism $h^\circ_S: S\times \overline{J}^e_{\CC_{|\beta|^\circ}} \to S\times |\beta|^\circ$.

The left-hand side of (\ref{ch_k}) can be calculated in terms of the Fourier transform; see \cite[Proposition 5.1]{MSY}. In order to deduce 
the desired perversity bound for $\mathrm{ch}^\alpha_{k+1}(\BF)$, we only need
\[
[\mathrm{ch}^\alpha_2(\BF)]_{(2,2)} \in P_1H^4\left(S\times \overline{J}^e_{\CC_{|\beta|^\circ}}\right)
\]
as noted in the proof of \cite[Theorem 0.6]{MSY}. This is given by Proposition \ref{prop1.3} (iii).
\end{proof}

\subsection{Remarks on $P=C$}\label{sec1.4}

Before we start to prove our main theorems, we conclude this section with some remarks concerning ``$P=C$" phenomena that appeared in several geometries and strategies to approach them.

The term ``$P=C$" stands for a phenomenon that two structures of very different flavors match on the cohomology of certain moduli of stable sheaves --- the perverse filtration associated with a natural proper map carried by the moduli space, and the Chern filtration obtained from a universal family and tautological classes. The perverse filtration is abstract but the Chern filtration is explicit.

To the best of our knowledge, the $P=C$ phenomenon was first found for the Hitchin system \cite{dCHM1}, and served as a key step in the proofs of the $P=W$ conjecture in non-abelian Hodge theory \cite{MS_PW, HMMS, MSY}. More precisely, the $P=W$ conjecture was reduced to $P=C$ for the Hitchin moduli space $M_{\mathrm{Dol}}$:
\begin{equation}\label{P=W}
P_k H^*(M_{\mathrm{Dol}}, \BQ) = C_k H^*(M_{\mathrm{Dol}}, \BQ).
\end{equation}
There are two approaches to prove (\ref{P=W}). The approach of \cite{MS_PW, MSY} is to prove first the weaker statement $P \supset C$. By the results of Shende \cite{Shende} and Mellit \cite{Mellit} on the character variety, we know that the right-hand side of (\ref{P=W}), which is identified with the weight filtration $W_{2k}H^*(M_B, \BQ)$ of the character variety, satisfies the Lefschetz symmetry. Therefore the weaker $P \supset C$ is sufficient to deduce (\ref{P=W}). The approach of \cite{HMMS} does not rely on Mellit's Lefschetz symmetry \cite{Mellit}; instead, the authors constructed a \emph{splitting} of the perverse filtration using the cohomological Hall algebra, and placed the tautological classes in the desired pieces of this decomposition. In other words, the $P=W$ or the $P=C$ match for the Hitchin system is stronger than a match of two filtrations --- it is indeed a match of two multiplicative decompositions.

Similar $P=C$ phenomena have been discovered for Lagrangian fibrations. Let $S$ be a K3 or an abelian surface, the moduli of 1-dimensional stable sheaves on $S$ admits a Hilbert--Chow morphism, which is Lagrangian and is now referred to as the Beauville--Mukai system. Using Markman's monodromy symmetry \cite{Markman2, Markman3}, it was proven in \cite[Theorem 2.1]{dCMS} that $P=C$ holds. This result was further applied in \cite{dCMS} to prove the $P=W$ conjecture for genus 2 curves via a degeneration argument. Note that in the compact Lagrangian setting, $P=C$ is again a match of two decompositions as in the Hitchin system case.

Compared to the cases above, the $P=C$ match for a del Pezzo surface seems to be quite different. The perverse filtration associated with the Hilbert--Chow morphism $h$ does not admit a multiplicative splitting.\footnote{In the case $S=\BP^2$, this can be seen easily from the obstruction on the existence of a multiplicative splitting of the perverse filtration used in \cite{BMSY}, or from the presentations of the cohomology rings in low degrees \cite{KLMP}.} Therefore, it seems hard to upgrade Conjecture \ref{conj1} to a match of two decompositions. On the other hand, it is in general challenging to prove a match of two \emph{honest} filtrations --- the lack of natural splittings of the filtrations provides difficulty in proving the equality of the two filtrations \emph{directly}. For example, on the perverse filtration side, for a class $\gamma \in P_kH^*(M_{\beta,\chi})$, it is in general hard to prove a statement like $\gamma \not\in P_{k-1}H^*(M_{\beta,\chi})$; on the Chern filtration side, the precise location of a class is significantly influenced by the relations between the tautological classes.  

In this paper, our approach to $P=C$ for del Pezzo surfaces is \emph{numerical}.\footnote{However, it seems hard to generalize the numerical argument of this paper to prove the full $P=C$ conjecture (Conjecture \ref{conj1}); new ideas may be needed.} More precisely, the following numerical formula (\ref{assumption_m}) governs  simultaneously tautological relations and the $P=C$ conjecture for $H^{\leq N_1(\beta)}(M_{\beta,\chi})$.  


\begin{cor}\label{cor1.7}
For a positive integer $m$ satisfying $2m\leq N_1(\beta)$, assuming
\begin{equation}\label{assumption_m}
n^{i,j}_\beta = [H(q,t)]^{i,j}, \quad i+j \leq 2m,
\end{equation}
then the following statements hold for any divisor $D$ satisfying \eqref{D}.
\begin{enumerate}
\item[(i)] For any classes
\[
\mathbf{1}_S \in H^0(S),~~ D, \gamma_1, \cdots, \gamma_{\rho-1} \in H^2(S),~~ \mathsf{pt} \in H^4(S)
\]
that form a basis of $H^*(S)$, the ($D$-normalized) tautological classes
\begin{align*}
   & c_0(\mathsf{pt}),~~ c_1(\gamma_i), ~~ c_2(\mathbf{1}_S) \in H^2(M_{\beta,\chi}),\\
  & c_{k-2}(\mathsf{pt}),~~ c_{k-1}(D), ~~c_{k-1}(\gamma_i),~~ c_{k}(\mathbf{1}_S) \in H^{2k-2}(M_{\beta,\chi}),\\
  &  k\in \{3,4,\dots, m+1\},\quad i \in \{1,2,\dots,\rho-1\}
\end{align*}
have no relations in $H^{\leq 2m}(M_{\beta,\chi})$. 
\item[(ii)] We have
    \[
P_kH^{\leq 2m}(M_{\beta,\chi}) = C_kH^{\leq 2m}(M_{\beta,\chi}).
\]
\end{enumerate}
\end{cor}

\begin{proof}
The proof of (i) resembles that of \cite[Theorem 1.2 (b)]{PS}. By \cite{Beau}, the tautological classes 
\[
c_k(\mathbf{1}_S), ~~ c_k(D), ~~c_k(\gamma_i),~~c_k(\mathsf{pt}) \in H^*(M_{\beta,\chi}), \quad  k\in \BN, \;\; i\in \{1,2,\dots, \rho-1\}
\]
generate the cohomology $H^*(M_{\beta,\chi})$ as a $\BQ$-algebra. The $D$-normalization condition of Definition~\ref{def} forces 
\[
c_1(\mathbf{1}_S) =0,\quad c_1(D) =0.
\]
The remaining non-trivial tautological classes are listed in (i) above. Therefore by counting monomials, we obtain
\begin{equation}\label{assumption_b}
b_i(M_{\beta,\chi}) \leq [T(q)]^i, \quad i\leq 2m.
\end{equation}
The equality is attained if and only if there are no relations between the classes of (i) in $H^{\leq 2m}(M_{\beta,\chi})$. Thus the assumption (\ref{assumption_m}), which implies that (\ref{assumption_b}) is an equality, proves (i).

\smallskip

From (i) and a more refined monomial counting, we have 
\[
\dim\mathrm{Gr}^C_iH^{i+j}(M_{\beta,\chi}) = [H(q,t)]^{i,j}, \quad i+j \leq 2m.
\]
Therefore (\ref{assumption_m}) further yields
\[
\dim \mathrm{Gr}^P_{i}H^{i+j}(M_{\beta,\chi}) = \dim \mathrm{Gr}^C_i H^{i+j}(M_{\beta,\chi}).
\]
In particular, (ii) is a corollary of Theorem \ref{thm1.6}.
\end{proof}

In the next sections, we prove Theorem \ref{thm0.2} which fulfills \eqref{assumption_m}. This implies Theorem \ref{thm0.3} by Corollary \ref{cor1.7}. Theorems \ref{thm0.4} and \ref{mainthm} for the case $S=\mathbb{P}^2$ also follow immediately from the asymptotic bound we obtained; see Remark \ref{final1}.

\section{Relative Hilbert schemes}

We study the geometry of the relative Hilbert schemes associated with the linear system $|\beta|$ on the del Pezzo surface $S$. Proposition \ref{prop2.1} concerning their Betti numbers will play a crucial role in the proof of the main theorems.

\subsection{Betti numbers for relative Hilbert schemes}
Let $\CC \to |\beta|$ be the universal curve associated with the linear system.  Denote by $\CC_{|\beta|^\circ} \to |\beta|^\circ$ its restriction to the locus $|\beta|^\circ \subset |\beta|$ of integral curves. For an integer $k \geq 0$, we consider the relative Hilbert schemes of $k$ points:
\begin{equation}\label{Hilb}
\CC^{[k]} \to |\beta|, \quad \CC_{|\beta|^\circ}^{[k]} \to |\beta|^\circ.
\end{equation}
We define
\[
N_2(\beta):= \mathrm{min}\{N_1(\beta), -\beta \cdot K_S -1 \}.
\]
By Proposition \ref{prop1.4}, we see that $N_2(\beta)$ is also an asymptotic bound.

Our main result of this section is the following, which calculates low degree Betti numbers of the relative Hilbert scheme $\CC_{|\beta|^\circ}^{[k]}$ associated with integral curves in terms of the Betti numbers of Hilbert scheme $S^{[k]}$ of points on $S$.

\begin{prop}\label{prop2.1}
For $i\leq N_1(\beta)$ and $ k \leq N_2(\beta)$, we have
\begin{equation}\label{eq_2.1}
b_i\left(\CC_{|\beta|^\circ}^{[k]}\right) =   b_i\left( S^{[k]} \times \BP^{\:\!\dim |\beta|-k} \right).
\end{equation}
\end{prop}

\smallskip

When $S=\BP^2$, we see from Example \ref{ex} that
\begin{equation*}\label{bound1}
N_1(dH) = N_2(dH) = 2d-4.
\end{equation*}
We note that the identity (\ref{eq_2.1}) for the weaker\footnote{Since Conjecture \ref{conj1} is proved for $d\leq 5$ in \cite{KLMP},  we may assume $d\geq 6$ here and after.} bound $d+1$ is obvious. Indeed, since the line bundle $\CO_{\BP^2}(d)$ is $d$-very ample (see \textit{e.g.} \cite{BFS}), the evaluation map
\[
\mathrm{ev}: \CC^{[k]} \to S^{[k]}
\]
is a projective bundle of fiber dimension $\dim |\beta|-k$ when $k\leq d+1$; this implies that
\[
b_{i}\left(\CC^{[k]}_{|dH|^\circ}\right) = b_i(\CC^{[k]}) = b_i\left( S^{[k]} \times \BP^{\:\!\dim |\beta|-k} \right), \quad i,\; k\leq d+1,
\]
where the first identity is given by a codimension bound (see Lemma \ref{lem2.2} below). Using this bound obtained from $d$-very-ampleness, we deduce immediately from the argument of Section~\ref{sec3.3}:
\begin{equation*}
n^{i,j}_d = [H(q,t)]^{i,j}, \quad i+j \leq d+1,
\end{equation*}
which further implies the $P=C$ match:
\[
P_k H^{\leq d+1}(M_{d,\chi}) = C_kH^{\leq d+1}(M_{d,\chi})
\]
by Corollary \ref{cor1.7}. Ideas of using $k$-very-ampleness have been applied in enumerative geometry for surfaces; see for example the Kool--Shende--Thomas proof of the G\"ottsche conjecture \cite{KST}, and the work of \cite{CKK, Zhao} which carried out some explicit calculations of refined BPS invariants for the local surfaces $\BP^2$ and $\BP^1\times \BP^1$.

Improving the bound $d+1$ to the optimal bound $2d-4$ requires a careful study of the relative Hilbert schemes (\ref{Hilb}). In particular, we need to consider the case where $\CC^{[k]}$ is possibly singular, and the evaluation map $\mathrm{ev}: \CC^{[k]} \to S^{[k]}$ fails to be a projective bundle.

\subsection{Proof of Proposition \ref{prop2.1}}
\label{sec2.2}
We consider the locus $U \subset S^{[k]}$ which parameterizes 0-dimensional subschemes $Z\subset S$ of length $k$, satisfying that the natural morphism
\begin{equation}\label{restriction}
H^0(S, \CO_S(\beta)) \to H^0(S, \CO_S(\beta)|_Z)
\end{equation}
is surjective. The restriction of the evaluation map
\[
\mathrm{ev}: \CH_U := \mathrm{ev}^{-1}(U) \to U
\]
is a projective bundle over $U$ associated with the vector bundle
\[
\mathrm{ker}\left(H^0(S, \CO_S(\beta)) \twoheadrightarrow H^0(S, \CO_S(\beta)|_Z)\right) \mapsto [Z] \in U.
\]

\begin{prop}\label{prop2.2}
For $k\leq N_2(\beta)$, we have
\[
\CC_{|\beta|^\circ}^{[k]} \subset \CH_U \subset \CC^{[k]}.
\]
\end{prop}

\begin{proof}
It suffices to show that the restriction map (\ref{restriction}) is surjective if $Z\subset S$ is a length $k$ subscheme lying on an integral curve $C\subset S$ in the linear system $|\beta|$.

Note that the morphism (\ref{restriction}) is the composition of the two morphisms:
\[
H^0(S, \CO_S(\beta)) \xrightarrow{~\mathrm{(A)}~} H^0(S, \CO_S(\beta)|_C) \xrightarrow{~\mathrm{(B)}~} H^0(S, \CO_S(\beta)|_Z).
\]
We show that both (A) and (B) are in fact surjective.

To treat (A), we consider the short exact sequence
\[
0 \to \CO_S \to \CO_S(\beta) \to \CO_S(\beta)|_C \to 0
\]
whose associated long exact sequence reads
\[
\cdots \to H^0(S, \CO_S(\beta)) \xrightarrow{~\mathrm{(A)}~} H^0(S, \CO_S(\beta)|_C) \to H^1(S, \CO_S) \to \cdots.
\]
The surjectivity of (A) follows from the vanishing $H^1(S, \CO_S) = 0$. 

Now we consider (B). The relevant short exact sequence for the integral curve $C$ is 
\begin{equation}\label{SES2}
0 \to \CI_Z \otimes \CO_C(\beta) \to \CO_C(\beta) \to \CO_C(\beta)|_Z \to 0.
\end{equation}
By the assumption on $k$, the degree of the torsion-free sheaf $\CI_Z\otimes \CO_C(\beta)$ satisfies
\[
\mathrm{deg}(\CI_Z\otimes \CO_C(\beta)) = \beta^2-k\geq \beta^2- N_2(\beta) > \beta( \beta+ K_S) =\mathrm{deg}(\omega_{C}).
\]
By Serre duality and the stability of $\CI_Z\otimes \CO_C(\beta)$, this yields
\[
H^1(C, \CI_Z \otimes \CO_C(\beta)) \simeq \mathrm{Hom}_C\left(\CI_Z\otimes \CO_C(\beta), \omega_C\right)^\vee =0.
\]
The surjectivity of (B) then follows from the long exact sequence associated with (\ref{SES2}). This completes the proof.
\end{proof}

The following proposition concerns the geometry of the relative Hilbert schemes (\ref{Hilb}). 

\begin{prop}\label{prop2.3}
    For any $k \in \BN$, we have the following.
    \begin{enumerate}
        \item[(i)] The relative Hilbert scheme $\CC^{[k]}$ is irreducible.
        \item[(ii)] The open subset $\CC^{[k]}_{|\beta|^\circ} \subset \CC^{[k]}$ is nonsingular and Zariski dense.
        \item[(iii)] We have
        \[
        \dim \CC^{[k]} = \dim \CC_{|\beta|^\circ}^{[k]} = \frac{1}{2}\beta(\beta-K_S) +k.
        \]
        \item[(iv)] We have
        \[
        \mathrm{codim}_{\CC^{[k]}}\left(\CC^{[k]} \setminus \CC^{[k]}_{|\beta|^\circ}\right) \geq \frac{1}{2}N_1(\beta) +1. 
        \]
    \end{enumerate}
\end{prop}

\begin{proof}
By \cite[Proposition 14]{Shende2}, the relative Hilbert scheme $\CC^{[k]}_{|\beta|^\circ}$ is nonsingular of dimension
\begin{equation}\label{dim0}
\dim \CC^{[k]}_{|\beta|^\circ} = \frac{1}{2}\beta(\beta-K_S) +k = \dim |\beta| +k.
\end{equation}
Now we consider the natural morphism
\[
\CC^{[k]} \to |\beta|,
\]
whose every closed fiber has dimenison $k$ by \cite[Theorem 1.1]{Luan}. Combined with (\ref{dim0}), we know that if $\CC^{[k]}$ is not irreducible, then it must have a component of dimension $< \dim |\beta| +k$. 

On the other hand, if we view $\CC^{[k]}$ as the moduli space of Pandharipande--Thomas stable pairs \cite[Proposition B.8]{PT2}, it admits a two-term perfect obstruction theory of virtual dimension
\begin{equation}\label{vd}
\mathrm{vdim} = \dim \mathsf{Tan} - \dim \mathsf{Obs} =  \frac{1}{2}\beta(\beta-K_S) +k;
\end{equation}
see \cite[Theorem A.7]{KT}. Hence every irreducible component of $\CC^{[k]}$ has dimension at least (\ref{vd}), which is a contradiction. Therefore we obtain the irreducibility of $\CC^{[k]}$. All the three statements (i, ii, iii) then follow. (iv) is a direct consequence of \cite[Theorem 1.1]{Luan}.
\end{proof}

Now we prove Proposition \ref{prop2.1}. We note the following lemma.

\begin{lem}\label{lem2.2}
Let $X$ be a complete and irreducible variety of dimension $a$. Let $W\subset X$ be a nonsingular open subset. Assume that
\[
2\cdot\mathrm{codim}_X(X\setminus W) > i+1.
\]
Then we have
\[
b_i(W) = b_{2a-i}(X).
\]
If we further assume that $X$ is nonsingular, then
\[
b_i(W) = b_{i}(X).
\]
\end{lem}

\begin{proof}
We write $Z:= X\setminus W$. The excision sequence reads
\[
\cdots \to H^{2a-i-1}(Z) \to H_c^{2a-i}(W) \to H^{2a-i}(X) \to H^{2a-i}(Z) \to \cdots. 
\]
By the codimension assumption, we have
\[
2\cdot \dim Z < 2a -i-1
\]
which implies the vanishing
\[
 H^{2a-i-1}(Z) = H^{2a-i}(Z) =0.
 \]
Consequently, combined with Poincar\'e duality we obtain
\[
b_i(W)= \dim H_c^{2a-i}(W) = \dim H^{2a-i}(X) = b_{2a-i}(X). 
\]
This proves the first part. The second part follows directly from Poincar\'e duality.
\end{proof}

\begin{proof}[Proof of Proposition \ref{prop2.1}]
We fix an integer $k \leq N_2(\beta)$. By Proposition \ref{prop2.2}, we have
\[
\mathrm{codim}_{\CC^{[k]}}(\CC^{[k]} \setminus \CH_U) \geq \mathrm{codim}_{\CC^{[k]}}\left(\CC^{[k]}\setminus \CC^{[k]}_{|\beta|^\circ}\right) \geq \frac{1}{2}N_1(\beta)+1 
\]
where the last inequality is given by Proposition \ref{prop2.3} (iv). This also implies that 
\[
\mathrm{codim}_{S^{[k]}} (S^{[k]} \setminus U) \geq \frac{1}{2}N_1(\beta)+1
\]
due to the semi-continuity of the dimensions of the fibers of $\mathrm{ev}: \CC^{[k]} \to S^{[k]}$. By Lemma \ref{lem2.2}, we have
\begin{equation}\label{4_id}
b_i(U) =  b_i(S^{[k]}), \quad i \leq N_1(\beta).
\end{equation}
Moreover, the fiber dimension of the projective bundle $\mathcal{H}_U \to U$ equals
\[
\dim \CC^{[k]} - \dim S^{[k]} = \dim |\beta| - k
\]
by Proposition \ref{prop2.3} (iii). Therefore, we conclude for any $i\leq N_1(\beta)$ that
\begin{align*}
    b_i\left(\CC_{|\beta|^\circ}^{[k]}\right) & = b_{2\cdot\dim \CC^{[k]}-i}(\CC^{[k]}) 
     = b_{i}(\CH_U) \\
    & =b_i\left( U \times \BP^{\:\!\dim |\beta| - k} \right)\\
    & = b_i\left( S^{[k]} \times \BP^{\:\!\dim |\beta| - k} \right).
\end{align*}
Here the first and the second equalities follow from Lemma \ref{lem2.2}, the third follows from the definition of $\CH_U$, and the fourth is given by (\ref{4_id}).
\end{proof}

\section{Proof of the main theorems}\label{sec3}

The purpose of this section is to introduce the asymptotic bound $N(\beta)$, to calculate $n^{i,j}_d$ when $i+j \leq N(\beta)$, and to match the result with the closed formula given by $H(q,t)$. This proves Theorem \ref{thm0.2}. In view of Remark \ref{final1}, this also proves Theorem \ref{thm0.4} concerning the optimal bound for $\BP^2$. Theorems~\ref{thm0.3} and \ref{mainthm} then follow from Corollary \ref{cor1.7}.

\subsection{Compactified Jacobians}
Recall the (twisted) compactified Jacobian fibration 
\[
h^\circ: \overline{J}^e_{\CC_{|\beta|^\circ}} \to |\beta|^\circ
\]
obtained from the base change of $h: M_{\beta,\chi} \to |\beta|$ to the open subset $|\beta|^\circ \subset |\beta|$. Since the restriction map 
\[
H^{\leq {N_1(\beta)}}(M_{\beta,\chi}) \to H^{\leq N_1(\beta)}(\overline{J}^e_{\CC_{|\beta|^\circ}})
\]
is an isomorphism preserving the perverse filtrations \cite[Corollary 5.3]{MSY}, we have
\begin{equation}\label{n=h}
n_\beta^{i,j} =\dim \mathrm{Gr}^P_iH^{i+j}\left(\overline{J}^e_{\CC_{|\beta|^\circ}}\right), \quad i+j\leq N_1(\beta).
\end{equation}

Our final key ingredient is the following result of Maulik--Yun \cite{MY} and Migliorini--Shende \cite{MiSh}, which relates the perverse filtration associated with $h^\circ$ to the cohomology of the relative Hilbert scheme of points $\CC_{|\beta|^\circ}^{[k]}$ by a support theorem.

\begin{thm}[\cite{MY, MiSh}]\label{thm2.3}
For any $m\geq 0$, we have an isomorphism of vector spaces
\[
H^m\left(\CC_{|\beta|^\circ}^{[k]}\right) \simeq \bigoplus_{i+j \leq k,\;j\geq0} \mathrm{Gr}^P_{i}H^{m-2j}\left(\overline{J}^e_{\CC_{|\beta|^\circ}}\right).
\]
\end{thm}

In particular, we obtain from (\ref{n=h}) that
\begin{equation}\label{betti}
b_m\left(\CC^{[k]}_{|\beta|^\circ}\right) = \sum_{i+j\leq k,\;j\geq0} n_{\beta}^{i,m-i-2j}, \quad m \leq N_1(\beta).
\end{equation}

\subsection{Proof of Theorem \ref{thm0.2}}
\label{sec3.3}

We first introduce the bound for Theorem \ref{thm0.2}:
\begin{equation}\label{final_bd}
N(\beta):= \min\left\{N_2(\beta),~\beta(\beta+K_S)+2 \right\} = \min\left\{N_1(\beta),~ -\beta\cdot K_S-1, ~\beta(\beta+K_S)+2 \right\}.
\end{equation}
This is clearly an asymptotic bound by Proposition \ref{prop1.4}; see also Remark \ref{final1}.


We proceed by induction on $i$ to prove that
\[
n_\beta^{i,j} = [H(q,t)]^{i,j}, \quad i+j \leq N(\beta).
\]
We only consider $n_\beta^{i,j}$ in this range. For $i=0$, a direct calculation or using \cite[Proposition 5.2.4]{dCM0} gives 
\[
n_\beta^{0,j} = \begin{cases}
    1 \quad j \textrm{ is even};\\
    0 \quad j \textrm{ is odd}.
\end{cases}
\]
On the other hand, we have $[H(q,t)]^{0,j}=1$ for  even $j$ and $0$ otherwise. Thus the case $i=0$ holds, which gives the induction base.

Assume now that Theorem \ref{thm0.2} holds for $i\leq \ell-1$ where $\ell> 0$ is an integer satisfying $\ell-1 < N(\beta)$. This means that we have the numerical match 
\[
n_\beta^{i,j} = [H(q,t)]^{i,j}, \quad i\leq \ell-1,\quad i+j\leq N(\beta).
\]
We first show for $k\leq N_1(\beta)$ that
\begin{equation}\label{bettidiff}
    b_k\left({\CC}_{|\beta|^\circ}^{[\ell]}\right)-b_k\left({\CC}_{|\beta|^\circ}^{[\ell-1]}\right)= \sum_{i+j=\ell} [H(q,t)]^{i, k-i-2j}.
\end{equation}

The left-hand side is governed by Proposition \ref{prop2.1}. If $k$ is odd, then both sides are zero and clearly (\ref{bettidiff}) holds. So we only focus on the case where $k$ is even. In view of the formula of Proposition \ref{prop2.3} (iii) and the asymptotic bound (\ref{final_bd}), we have 
\[
\dim \CC^{[\ell]} - \dim S^{[\ell]} = \dim |\beta|-\ell \geq  \frac{k}{2}. 
\]
Therefore Proposition~\ref{prop2.1} implies that
\[
b_k(\CC_{|\beta|^\circ}^{[\ell]}) = \sum_{s\leq k} b_s(S^{[\ell]}), \quad k\leq N_1(\beta).
\]
This also holds if we replace $\ell$ with $\ell-1$. Consequently, we have
\begin{equation}\label{b=b}
b_k\left({\CC}_{|\beta|^\circ}^{[\ell]}\right)-b_k\left({\CC}_{|\beta|^\circ}^{[\ell-1]}\right) = \sum_{s=0}^{k} b_{s}({S}^{[\ell]})-b_{s}({S}^{[\ell-1]}),\quad k \leq N_1(\beta).
\end{equation}
We will need the Göttsche formula \cite{Go1} (in two variables)
\[
G(z,w):=\prod_{i\geq 1} \frac{1}{(1-z^{2i-2}w^i)(1-z^{2i} w^i)^\rho(1-z^{2i+2} w^i)},
\]
which calculates Betti numbers of ${S}^{[m]}$: 
\[
G(z,w) = \sum_{m\geq 0}\sum_{s\geq 0} b_{s}({S}^{[m]})z^s w^m.
\]
Note that $G(z,w)$ specializes to the asymptotic version \eqref{Gott} via $z=q$ and $w=1$. Combining this and (\ref{b=b}), we see that \eqref{bettidiff} is equivalent to
\begin{equation}
\label{coeffidentity}
    \sum_{s=0}^{k}([G(z,w)]^{s,\ell} - [G(z,w)]^{s,\ell-1}) = \sum_{i+j=\ell} [H(q,t)]^{i,k-i-2j}.
\end{equation}
Now we compare the two sides:
\begin{align*}
    \textrm{LHS of } \eqref{coeffidentity} & = \sum_{s=0}^{k}[G(z,w)\cdot (1-w)]^{s,\ell} = \left[G(z,w)\cdot\frac{1-w}{1-z^2}\right]^{k,\ell},\\
    \textrm{RHS of }\eqref{coeffidentity} & = \sum_{i=0}^\ell [H(q,t)]^{i,k-2\ell+i}=\left[H(q,t)\cdot \frac{1}{1-qt}\right]^{\ell, k-\ell}.
\end{align*}
The identity \eqref{coeffidentity} then follows from the observation that the two series
\[
G(z,w)\cdot\frac{1-w}{1-z^2}, \quad H(q,t)\cdot\frac{1}{1-qt}
\]
are identified by the change of variables 
\[
z=t,\quad  w =q/t.
\]
This completes the proof of (\ref{bettidiff}).

Now we finish the induction. Note that the left-hand side of (\ref{bettidiff}) is equal to 
\[
\sum_{i+j=\ell} n_\beta^{i,k-i-2j} = n_\beta^{\ell, k-\ell} + \sum_{i<\ell,\, i+j=\ell}   n_\beta^{i,k-i-2j}
\]
by combining \eqref{betti} and \eqref{n=h}. Since \eqref{bettidiff} holds, using the induction hypothesis to remove terms with $i\leq \ell-1$ on both sides yields \[
n_\beta^{\ell, k-\ell} = [H(q,t)]^{\ell, k-\ell}, \quad k \leq N(\beta).
\]
This completes the induction step, and hence the proof of Theorem \ref{thm0.2}. 
\qed

\begin{rmk}\label{final1}
    One can calculate using results in \cite{Rocco} that
    \[
    N(\beta) = N_1(\beta)= 2\cdot \mathrm{codim}_{|\beta|}(|\beta| \setminus |\beta|^\circ)-2 
    \]
    for del Pezzo surfaces of degrees $\geq 3$, namely $\mathbb{P}^2,\: \mathbb{P}^1\times \mathbb{P}^1$, and the blow-up of $\mathbb{P}^2$ in $n$ very general points where $1\leq n \leq 6$.\footnote{For $n=7,8$, the numerical analysis is more involved and the statement fails for certain curve classes $\beta$.}
    In particular, for $S=\BP^2$ we have
    \[
    N(dH) = \min\left\{2d-4,\; d(d-3)+2\right\} = 2d-4, \quad d\geq 1.
    \]
Therefore our asymptotic bound recovers the optimal bound for $\BP^2$, and consequently Theorem~\ref{thm0.2} implies Theorems \ref{thm0.4} and \ref{mainthm}.

\smallskip

Since $\beta(\beta+K_S)+2$ is quadratically dependent on the curve class $\beta$, it is clear that 
\[
N(m\beta) = N_2(m\beta)
\]
when $m$ is sufficiently large.
\end{rmk}

\subsection{Final remarks} We conclude the paper with a few further remarks.

\subsubsection{Cohomology of perverse sheaves and stabilization}\label{3.4.1}

The invariants $n_\beta^{i,j}$ only depend on nonsingular curves in the linear system $|\beta|$. More precisely, let $V\subset |\beta|$ be the open subset of nonsingular curves in $S$, with $f_V: \CC_V \to V$ the universal curve. Then we can obtain a natural local system
\[
\CL_\beta:= R^1f_{V*} \BQ_{\CC_V} \in \mathrm{LocSys}(V)
\]
which calculates the cohomology $H^1$ of the fibers. By the full support theorem \cite{MS_GT}, the refined BPS invariants can be computed as the cohomology
\begin{equation}\label{LS}
n_\beta^{i,j}:= \dim H^{j-\dim {|\beta|}}\left( |\beta|, \mathrm{IC}(\wedge^i \CL_\beta)\right).
\end{equation}
It would be interesting to find an effective algorithm to calculate the cohomology groups on the right-hand side directly. Our main result implies the stabilization of $n^{i,j}_\beta$ but our proof is \emph{indirect}. A natural question is to find a direct explanation of this stabilization phenomenon for the cohomology of perverse sheaves. 

\subsubsection{Formulas for refined BPS invariants associated with surfaces}

Although the (unrefined) BPS invariants have been calculated in many cases via Gromov--Witten/Donaldson--Thomas theory over the last few decades, very few closed formulas are known for the refined BPS invariants.

In a few special cases, there are closed formulas for all the refined BPS invariants. For the case of a local K3 surface, it was proven in \cite{SY} that the refined BPS invariants match perfectly with the Hodge numbers of the Hilbert scheme of points on a K3 surface, as predicted by \cite{KKV, KKP}. We refer to \cite{T} for other refined invariants associated with K3 surfaces.

For the local curve $X= T^*C \times \BC$ with $C$ a curve of genus $g\geq 2$, the geometry of BPS invariants are closely related to the topology of the Hitchin system. As a consequence of the $P=W$ conjecture (now a theorem), the refined BPS invariants of $X$ are given by the weight polynomials of the character variety associated with the curve $C$, whose closed formulas were conjectured by \cite{HRV} in terms of Macdonald polynomials. We refer to \cite{CDP} for more details on this connection.

Recently, Oberdieck conjectured in certain curve classes closed formulas for the local Enriques surface \cite{Ober2}, refining the Gromov--Witten calculation \cite{Ober}.

All these formulas have asymptotic product expressions whose shapes are similar to $H(q,t)$.

\end{document}